\documentclass[10pt]{article}
\usepackage[a4paper,margin=24mm]{geometry}
\usepackage{amsmath,amssymb,amsthm,mathtools}
\usepackage[authoryear]{natbib}
\usepackage[hidelinks]{hyperref}
\usepackage{tikz-cd}
\newtheorem{theorem}{Theorem}

\newtheorem{example}[theorem]{Example}
\theoremstyle{remark}
\newcommand{\Ob}{\operatorname{Ob}}

\title{A Counterexample to the Open Question on Object Ideals}
\author{Qikai Wang, Yuxiao Wang and Haiyan Zhu\thanks{Corresponding author\\  Supported by the National Natural Science Foundation of China(12271481).}}
\date{}
\begin{document}
\maketitle
\begin{abstract} 
We give a counterexample to completeness descent from ideal cotorsion pairs to their objects.  
A radical-square-zero algebra on the two-cycle gives a finite-dimensional example.  The construction is intrinsically non-weakly-idempotent-complete.
\end{abstract}
 
Let \((\mathcal A;\mathcal E)\) be an exact category. 
For a full subcategory \(\mathcal C\subseteq\mathcal A\), put
\[
 {}^{\perp}\mathcal C
 =\{F\in\mathcal A\mid
     \operatorname{Ext}(F,C)=0\text{ for every }C\in\mathcal C\},
\]
and define \(\mathcal F^{\perp}\) dually for a full subcategory \(\mathcal F\subseteq\mathcal A\). A pair \((\mathcal F,\mathcal C)\)
of full subcategories of \(\mathcal A\) is a \emph{cotorsion pair} if $\mathcal F={}^{\perp}\mathcal C$, $\mathcal C=\mathcal F^{\perp}$.
Cotorsion pairs were introduced by \citep{Salce1979} for abelian groups.
Such a cotorsion pair is \emph{special precovering} if every object \(A\in\mathcal A\) occurs in a conflation $C\longrightarrow F\longrightarrow A$ with \(F\in\mathcal F\) and \(C\in\mathcal C\). 
It is \emph{special preenveloping} if every \(A\in\mathcal A\) occurs in a conflation $A\longrightarrow C'\longrightarrow F'$ with \(F'\in\mathcal F\) and \(C'\in\mathcal C\). 
It is \emph{complete} if both conditions hold.

If \(\mathcal X\) is a full additive subcategory of \(\mathcal A\), let \(\mathcal I(\mathcal X)\) denote the ideal of morphisms that factor through an object of \(\mathcal X\).  
For an ideal \(\mathcal I\), put
\[
  \operatorname{Ob}(\mathcal I)
  =\{X\in\mathcal A\mid 1_X\in\mathcal I\}.
\]
Following \citep{FGHT2013}, an ideal \(\mathcal I\) is called an \emph{object ideal} if it is generated by its objects, that is,
$\mathcal I=\mathcal I(\operatorname{Ob}(\mathcal I)).$
Equivalently, \(\mathcal I=\mathcal I(\mathcal X)\) for some full additive subcategory \(\mathcal X\).
For an ideal \(\mathcal J\) of \(\mathcal A\), define
\({}^{\perp}\mathcal J\) to be the ideal of morphisms \(i\) such that
\(\operatorname{Ext}(i,j)=0\) for every \(j\in\mathcal J\), and define
\(\mathcal I^{\perp}\) dually. More explicitly, for morphisms
\(i:A\to B\) and \(j:X\to Y\), $\operatorname{Ext}(i,j)=j_*i^*: \operatorname{Ext}(B,X)\longrightarrow\operatorname{Ext}(A,Y)$ is the homomorphism obtained by pullback along \(i\) and pushout along
\(j\).  
A pair \((\mathcal I,\mathcal J)\) is an ideal cotorsion pair if \(\mathcal I={} ^{\perp}\mathcal J\) and
\(\mathcal J=\mathcal I^{\perp}\).  
It is complete when \(\mathcal I\) is special precovering and \(\mathcal J\) is special preenveloping in the sense of ideal approximation theory \citep{FGHT2013}.

If both ideals are object ideals, put \(\mathcal F=\Ob(\mathcal I)\) and \(\mathcal C=\Ob(\mathcal J)\).
Then their objects form a cotorsion pair $(\mathcal{F},\mathcal{C})$.
The remaining question is whether completeness also descends:
\[
 (\mathcal I,\mathcal J)\text{ complete and object ideals}
 \quad\xRightarrow{?}\quad
 (\Ob(\mathcal I),\Ob(\mathcal J))\text{ complete}.       \tag{1}
\]
This question  was raised by Fu et al. \citep[Question 29]{FGHT2013}.  
A positive answer is known when the exact category has enough projective and injective objects and the right ideal is enveloping \citep{SWZ2024}, and also in the Krull–Schmidt setting \citep{ZZ2026}. 
Although positive answers are known under additional hypotheses, the question in arbitrary exact categories has remained open.  
We answer it negatively.

\begin{theorem}\label{thm:main}
Let \(\mathcal B\) be an abelian category with enough projectives and let
\(\lambda\) be an integer-valued function on the objects of \(\mathcal B\),
invariant under isomorphisms and additive on short exact sequences. Suppose
\[
 \lambda(P)=0\quad(P\in\operatorname{Proj}(\mathcal B)),
 \qquad \lambda(S)>0
\]
for some \(S\in\mathcal B\). Let
\(\mathcal A=\{M\in\mathcal B\mid\lambda(M)\geq0\}\), endowed with the
exact structure induced from \(\mathcal B\), and put
\(\mathcal P=\operatorname{Proj}(\mathcal A)\). Then
\[
 (\mathcal I(\mathcal P),\mathcal I(\mathcal A))
\]
is a complete ideal cotorsion pair of object ideals, whereas
\((\Ob(\mathcal I(\mathcal P)),\Ob(\mathcal I(\mathcal A)))
=(\mathcal P,\mathcal A)\) is not complete.  Moreover,
\(\mathcal A\) is not weakly idempotent complete.
\end{theorem}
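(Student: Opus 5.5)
The plan is to reduce everything to one observation. Since \(\lambda\) is additive, \(\mathcal A\) is closed under extensions in \(\mathcal B\), so \(\operatorname{Ext}_{\mathcal A}=\operatorname{Ext}^1_{\mathcal B}\) on objects of \(\mathcal A\). Moreover \(\mathcal A\) absorbs any object of negative \(\lambda\) after adding enough copies of \(S\): for \(K\in\mathcal B\) we have \(K\oplus S^n\in\mathcal A\) as soon as \(n\lambda(S)\ge-\lambda(K)\).

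For \(Y\in\mathcal A\) I fix an epimorphism \(q:Q\to Y\) in \(\mathcal B\) with \(Q\in\operatorname{Proj}(\mathcal B)\) and kernel \(K\). Then \(\lambda(K)=-\lambda(Y)\). Note that \(\operatorname{Proj}(\mathcal B)\subseteq\mathcal A\), since \(\lambda=0\) there.

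\textbf{Step 1: \(\mathcal P=\operatorname{Proj}(\mathcal B)\).} Clearly \(\operatorname{Proj}(\mathcal B)\subseteq\mathcal P\). Conversely, let \(F\in\mathcal P\) and take \(K\to Q\to F\) as above. Then \(0=\operatorname{Ext}(F,K\oplus S^n)\supseteq\operatorname{Ext}(F,K)\), so the sequence splits and \(F\) is a summand of \(Q\). Hence \(F\in\operatorname{Proj}(\mathcal B)\) and \(\lambda(F)=0\).

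Also \(\Ob(\mathcal I(\mathcal P))=\mathcal P\), because \(\mathcal P\) is closed under summands lying in \(\mathcal A\). Trivially \(\Ob(\mathcal I(\mathcal A))=\mathcal A\).

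\textbf{Step 2: \((\mathcal P,\mathcal A)\) is not complete.} A special precover \(C\to F\to S\) would have \(F\in\mathcal P\), so \(\lambda(F)=0\) by Step 1. This forces \(\lambda(C)=-\lambda(S)<0\), contradicting \(C\in\mathcal A\).

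\textbf{Step 3: the ideal cotorsion pair.}
\begin{itemize}
\item \(\mathcal I(\mathcal P)^{\perp}\) is the ideal of all morphisms, i.e. \(\mathcal I(\mathcal A)\). Indeed, any \(i\) factoring through a projective has \(i^*=0\) on \(\operatorname{Ext}\).
\item \({}^{\perp}\mathcal I(\mathcal A)=\mathcal I(\mathcal P)\), and this is the key step. The inclusion \(\supseteq\) is clear. Conversely, let \(i:X\to Y\) satisfy \(\operatorname{Ext}(i,j)=0\) for all \(j\). Take \(K\to Q\xrightarrow{q}Y\) and \(j=1_{K\oplus S^n}\), which gives \(i^*=0\) on \(\operatorname{Ext}(Y,K\oplus S^n)\). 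The class of the conflation \(K\oplus S^n\to Q\oplus S^n\xrightarrow{(q,0)}Y\) restricts to that of \(K\to Q\to Y\) on the summand \(K\). It follows that the pullback of \(K\to Q\to Y\) along \(i\) splits. Hence \(i\) lifts through \(q\), so \(i\) factors through \(Q\in\mathcal P\).
\item Completeness uses the same trick. For \(A\in\mathcal A\), the sequence \(K\oplus S^n\to Q\oplus S^n\xrightarrow{(q,0)}A\) is a conflation in \(\mathcal A\) whose deflation factors through \(Q\), so it lies in \(\mathcal I(\mathcal P)\). Since \(\mathcal I(\mathcal P)^{\perp}\) consists of all morphisms, the ``\(\mathcal J\)-part'' required in the definition of a special \(\mathcal I\)-precover of \citep{FGHT2013} can be taken to be an identity. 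This gives a special \(\mathcal I(\mathcal P)\)-precover.
\item Dually, \(1_A\), coming from the trivial conflation \(A\to A\to 0\), is a special \(\mathcal I(\mathcal A)\)-preenvelope.
\end{itemize}
The point I expect to need the most care is matching these conflations with the precise FGHT definition of special ideal approximations. Everything else is forced by \(\mathcal J\) being the whole ideal.

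\textbf{Step 4: \(\mathcal A\) is not weakly idempotent complete.} Take \(Y=S\), so that \(\lambda(K)=-\lambda(S)<0\) and \(K\notin\mathcal A\). The projection \(K\oplus S^n\to S^n\) is a retraction in \(\mathcal A\). Any kernel of it in \(\mathcal A\) is also its kernel in \(\mathcal B\), since kernels in \(\mathcal A\) of a split map are computed in \(\mathcal B\) via the splitting. That kernel would therefore be isomorphic to \(K\). This contradicts \(K\notin\mathcal A\), since \(\lambda\) is isomorphism invariant.
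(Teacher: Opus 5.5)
Your proof is correct and follows the paper's route: you pad by copies of $S$ to land in $\mathcal A$, split the pullback of a projective presentation along $i$ to get ${}^{\perp}\mathcal I(\mathcal A)=\mathcal I(\mathcal P)$, use a deflation of the form $(q,0)$ with an identity as the $\mathcal I^{\perp}$-part, and use the $\lambda$-count for non-completeness. The only local differences are these: the paper pads the precover with $M$ itself, giving kernel $K_M\oplus M$ of defect exactly $0$ so that no choice of $n$ is needed. It also proves non-weak-idempotent-completeness by failure of deflation cancellation for $(\pi_S,0)=\pi_S\operatorname{pr}_{P_S}$. You instead exhibit a retraction $K\oplus S^n\to S^n$ with no kernel in $\mathcal A$, and your splitting argument that any such kernel would be isomorphic to $K$ is sound; this is the general form of the paper's closing remark about $S_1\oplus S_2\to S_1$ in the Example.
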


\begin{proof}
Exact additivity makes \(\mathcal A\) additive and extension-closed.
Every projective object of \(\mathcal B\) belongs to \(\mathcal A\) and
is projective in \(\mathcal A\). Conversely, let
\(T\in\operatorname{Proj}(\mathcal A)\) and \(Y\in\mathcal B\).
Choose \(n\geq0\) such that \(\lambda(Y)+n\lambda(S)\geq0\), and put
\(Y^+=Y\oplus S^{\oplus n}\). Then \(Y^+\in\mathcal A\). Since
\(\mathcal A\) is full and extension-closed,
\[
0=\operatorname{Ext}^1_{\mathcal A}(T,Y^+)
 \cong\operatorname{Ext}^1_{\mathcal B}(T,Y)
 \oplus\operatorname{Ext}^1_{\mathcal B}(T,S)^{\oplus n}.
\]
Thus \(T\) is projective in \(\mathcal B\), and
\(\mathcal P=\operatorname{Proj}(\mathcal B)\).

Set
\[
 (\mathcal I,\mathcal J)
 :=(\mathcal I(\mathcal P),\mathcal I(\mathcal A)).
\]
Here \(\mathcal J\) is the ideal of all morphisms of \(\mathcal A\),
because every morphism factors through its domain. Every morphism in
\(\mathcal I\) factors through a projective object, hence is left
Ext-orthogonal to every morphism. Therefore
\(\mathcal I^{\perp}=\mathcal J\).

Let \(f:U\to V\) belong to \({}^{\perp}\mathcal J\), and let
\(Y\in\mathcal B\). Choose \(n\) as above and put
\(Y^+=Y\oplus S^{\oplus n}\in\mathcal A\). Since
\(1_{Y^+}\in\mathcal J\), we have
\(\operatorname{Ext}_{\mathcal A}(f,1_{Y^+})=0\). The induced exact
structure and the direct-sum decomposition of \(Y^+\) give
\[
 \operatorname{Ext}^1_{\mathcal B}(f,1_Y)=0.             \tag{2}
\]
Choose an exact sequence in \(\mathcal B\)
\[
0\longrightarrow L_V\longrightarrow P_V
 \xrightarrow{\pi_V}V\longrightarrow0,
\]
where \(P_V\) is projective. Taking \(Y=L_V\) in (2) shows that its
pullback along \(f\) splits. Equivalently, \(f\) lifts through
\(\pi_V\), so it factors through \(P_V\in\mathcal P\). Hence
\[
 {}^{\perp}\mathcal J=\mathcal I,
 \qquad \mathcal J=\mathcal I^{\perp}.                   \tag{3}
\]

It remains to prove completeness. Fix \(M\in\mathcal A\). Choose in
\(\mathcal B\) a projective epimorphism
\(\pi_M:P_M\twoheadrightarrow M\), put \(K_M=\ker\pi_M\), and define
\[
 p_M=(\pi_M,0):P_M\oplus M\longrightarrow M.
\]
The kernel of \(p_M\) is \(K_M\oplus M\), and additivity gives
\[
 \lambda(K_M\oplus M)
 =\lambda(K_M)+\lambda(M)=\lambda(P_M)=0.
\]
Thus
\[
 K_M\oplus M\longrightarrow P_M\oplus M
 \xrightarrow{p_M}M
\]
is a conflation in \(\mathcal A\). The morphism \(p_M\) belongs to
\(\mathcal I=\mathcal I(\mathcal P)\), and every morphism in
\(\mathcal I\) with codomain \(M\) factors through \(p_M\). Moreover,
the displayed conflation is its own pushout along
\(1_{K_M\oplus M}\in\mathcal J=\mathcal I^{\perp}\). Hence \(p_M\)
is a special \(\mathcal I\)-precover of \(M\).

The identity \(1_M\) is a \(\mathcal J\)-preenvelope of \(M\). The split
conflation
\[
 M\xrightarrow{1_M}M\longrightarrow0
\]
is its own pullback along
\(1_0=0\in\mathcal I={}^{\perp}\mathcal J\). Hence \(1_M\) is a
special \(\mathcal J\)-preenvelope. Therefore
\((\mathcal I,\mathcal J)\) is complete.

Since projective objects are closed under retracts,
\(\Ob(\mathcal I)=\mathcal P\); clearly
\(\Ob(\mathcal J)=\mathcal A\). If \((\mathcal P,\mathcal A)\) were
complete, \(S\) would admit a conflation
\[
 L\longrightarrow P'\longrightarrow S
\]
in \(\mathcal A\), with \(P'\in\mathcal P\) and \(L\in\mathcal A\).
Then
\[
 \lambda(L)=\lambda(P')-\lambda(S)=-\lambda(S)<0,
\]
a contradiction.  
Thus \((\mathcal P,\mathcal A)\) is not complete.

Finally, choose a projective epimorphism
\(\pi_S:P_S\twoheadrightarrow S\) in \(\mathcal B\). The composite
\[
 (\pi_S,0)=\pi_S\operatorname{pr}_{P_S}:
 P_S\oplus S\longrightarrow S
\]
is a deflation in \(\mathcal A\), whereas \(\pi_S\) is not, since
\(\lambda(\ker\pi_S)=-\lambda(S)<0\).
Deflation cancellation fails, so \(\mathcal A\) is not weakly idempotent complete.
\end{proof}

\begin{example}
    Let \(k\) be a field and
\[
 Q:\begin{tikzcd}
1 \arrow[rr, "\alpha", bend left] &  & 2 \arrow[ll, "\beta", bend left]
\end{tikzcd},
 \qquad \Lambda=kQ/(\alpha\beta,\beta\alpha).
\]
An object \(M\in\mathcal B=\operatorname{mod}\text{-}\Lambda\) is a finite-dimensional representation \(M=(M_1,M_2;M_\alpha,M_\beta)\) of
\(Q\).  
Define
\[
\ell(M)=\dim_k M_1-\dim_k M_2.
\]
Both indecomposable projectives have dimension vector \((1,1)\), while the simple \(S_1\) has \(\ell(S_1)=1\).  
Theorem~\ref{thm:main} therefore applies to \(\mathcal A=\{M\mid\ell(M)\ge0\}\).

The shape of the example explains the construction.  
The second arrow \(\beta\) makes both projective dimension vectors lie in the kernel of \(d_1-d_2\); for the one-way quiver this functional does not annihilate
all projectives.  
The relations kill the two length-two cycles, make the algebra finite-dimensional, and leave both projectives with one top and one radical composition factor.  
The essential ingredient is therefore not the chosen quiver but a nonzero exact-additive function annihilating projective objects.

The ideal approximation repairs a forbidden kernel by adjoining the target: \(K_M\) may have negative defect, while \(K_M\oplus M\) has defect
zero.  
This also explains the failure of weak idempotent completeness.
Indeed, if \(S_2\) is the other simple, then \(S_1,S_1\oplus S_2\in\mathcal A\) but \(S_2\notin\mathcal A\); the retraction \(S_1\oplus S_2\to S_1\) has no kernel in \(\mathcal A\).
\end{example}

The counterexample consequently settles the question only for arbitrary exact categories.  
It does not decide whether completeness descends in every weakly idempotent complete exact category or every abelian category.

\bibliographystyle{plain}
\bibliography{references}

@article{FGHT2013,
  author={Fu, X. H. and Guil Asensio, P. A. and Herzog, I. and Torrecillas, B.},
  title={Ideal approximation theory},
  journal={Advances in Mathematics},
  volume={244},
  pages={750--790},
  year={2013},
  doi={10.1016/j.aim.2013.05.020}
}

@incollection{Salce1979,
  author={Salce, L.},
  title={Cotorsion theories for abelian groups},
  booktitle={Symposia Mathematica},
  volume={23},
  pages={11--32},
  year={1979},
  publisher={Academic Press},
  address={London}
}

@misc{SWZ2024,
  author={Sun, D. and Wang, Q. and Zhu, H.},
  title={Cotorsion pairs and {Enochs Conjecture} for object ideals},
  year={2024},
  eprint={2412.05519},
  archivePrefix={arXiv},
  note={arXiv:2412.05519},
  doi={10.48550/arXiv.2412.05519}
}

@misc{ZZ2026,
  author={Zhang, Y. and Zhou, P.},
  title={Ideal {$n$}-cotorsion pairs in {Frobenius} extriangulated categories},
  year={2026},
  eprint={2606.29728},
  archivePrefix={arXiv},
  note={arXiv:2606.29728},
  doi={10.48550/arXiv.2606.29728}
}

\vspace{4mm}
\small
\noindent\textbf{Qikai Wang}\\
School of Engineering, Westlake University, Hangzhou 310014, China\\
E-mail: wangqikai@westlake.edu.cn\\[1mm]
\textbf{Yuxiao Wang}\\
School of Mathematical Science, Zhejiang University of Technology, Hangzhou 310023, China\\
E-mail: 524517300@qq.com\\[1mm]
\textbf{Haiyan Zhu}\\
School of Mathematical Science, Zhejiang University of Technology, Hangzhou 310023, China\\
E-mail: hyzhu@zjut.edu.cn\\[1mm]
\end{document}